\pgfplotsset{compat=1.8}
\newtheorem{theorem}{Theorem}
\newtheorem{lemma}[theorem]{Lemma}
\newtheorem{remark}[theorem]{Remark}
\newcommand{\df}[2]{\frac{\textnormal{d}  #1}{\textnormal{d} #2}}
\newcommand{\pd}[2]{\dfrac{\partial #1}{\partial #2}}
\renewcommand{\O}{\mathcal{O}}
\newcommand{\eps}{\varepsilon}
\newcommand{\fbr}{f_{\textnormal{br}}} 
\newcommand{\fcr}{f_{\textnormal{cr}}} 
\newcommand{\yinfty}{y_{\E}} 
\newcommand{\zinfty}{z_{\E}} 
\newcommand{\E}{{\mathchoice{}{}{\scriptscriptstyle}{} \textnormal{E}}}
\begin{document}

\title{A minimal model for adaptive SIS epidemics}

\author{Massimo A. Achterberg$^{1,}$\thanks{M.A.Achterberg@tudelft.nl} \ and Mattia Sensi$^{2,}$\thanks{mattia.sensi@inria.fr}}
\date{\small $^1$Faculty of Electrical Engineering, Mathematics and Computer Science \\ Delft University of Technology, P.O. Box 5031, 2600 GA Delft, The Netherlands \\
$^2$MathNeuro Team, Inria at Universit\'e C\^ote d'Azur, 2004 Rte des Lucioles, 06410 Biot, France}
\maketitle




\begin{abstract}
    The interplay between disease spreading and personal risk perception is of key importance for modelling the spread of infectious diseases. We propose a planar system of ordinary differential equations (ODEs) to describe the co-evolution of a spreading phenomenon and the average link density in the personal contact network. Contrary to standard epidemic models, we assume that the contact network changes based on the current prevalence of the disease in the population, i.e.\ the network adapts to the current state of the epidemic. We assume that personal risk perception is described using two functional responses: one for link-breaking and one for link-creation. The focus is on applying the model to epidemics, but we also highlight other possible fields of application. We derive an explicit form for the basic reproduction number and guarantee the existence of at least one endemic equilibrium, for all possible functional responses. Moreover, we show that for all functional responses, limit cycles do not exist.
\end{abstract}

\section{Introduction}

Classical compartmental models in epidemiology rely on the widely accepted assumption of homogeneous mixing. While this assumption greatly simplifies the analysis of such models, its interpretation clashes with the reality of human interaction. Network models have been proposed and studied to include a more realistic pattern of connections between individuals \cite{ReviewPaperSIS}.

Most network-based research focuses on contact patterns that remain fixed over time. However, real-world contacts vary over time, especially during epidemic outbreaks, because of individual decisions of people to avoid contact with other people. Such networks are called \emph{adaptive} networks, because the network adapts itself to the spread of the disease \cite{GrossAdaptiveReview}. 

The excellent review by Verelst \emph{et al}.\ \cite{verelst2016reviewBehaviorEpidemics} provides an overview of various practical approaches for the mathematical modelling of the interplay between disease and human behavior. A multi-layer approach was adopted by Sahneh \emph{et al}.\ \cite{sahneh_2019_awareness_SIS}, where one layer describes the disease transmission and another layer the awareness of individuals about the disease. Gross \emph{et al}.\ \cite{GrossEtAl2006} proposed a rewiring mechanism, which rewires the link between two connected susceptible-infected nodes to two susceptible nodes. Kiss \emph{et al}.\ \cite{RandomLinkDeActivate} (and independently Achterberg \emph{et al}.\ \cite{G-ASIS}) introduced a Link Activation-Deactivation model, in which links can be broken or created between two specified types of nodes. Jolad \emph{et al}.\ \cite{jolad2012adaptiveresponse} assumes that all individuals have a preferred number of neighbours, subject to random link addition and removals. The preferred degree is taken to be a function of the current number of infected nodes in the network. Brauer \cite{brauer2011simple} discusses an SIR model in which a certain percentage of the links is removed. The removal percentage is larger if the link is connected to infected nodes rather than susceptible nodes. All such models capture a particular aspect of human behavior on disease dynamics, but most are so complicated, that an exact analysis is completely infeasible.

In this work, we propose a minimal model consisting of two ODEs, one for the viral prevalence in the population using the NIMFA equations \cite{IntroNIMFA}, and one for the weights of the links in the contact network. We model the creation and removal of edges as an overall increase or decrease of the weight on the edges. We call the model adaptive NIMFA (aNIMFA), in line with earlier work \cite{AchterbergMieghemAdaptiveMeanField}. The core aspect of aNIMFA are the \emph{functional responses} of individuals to create or break links in the network, based on the current number of infected people. In predator-prey systems like Volterra-Lotka dynamics, Holling introduced functional responses to describe the food intake by predators as a function of the number of available prey \cite{holling1959volterralotka}. A preliminary analysis of the aNIMFA model was performed by Achterberg and Van Mieghem \cite{AchterbergMieghemAdaptiveMeanField}, but only for specific functional responses. We extend the results from \cite{AchterbergMieghemAdaptiveMeanField} by considering general functional responses and by providing a more detailed analysis.

The aNIMFA model is not limited to modelling epidemic spread, but can be utilised for describing general spreading phenomena, including opinion dynamics, Maki-Thompson rumour spread, and others. In the context of epidemics, one would expect the removal (resp. creation) of links to be directly (resp. inversely) proportional to the prevalence. For other spreading phenomena, such as rumor spreading, this might not be the case, and other choices for the functional responses can be made. The simplicity of the aNIMFA model makes it a promising tool for future generalizations and for the integration of more complex mechanisms. 

Lastly, we consider the situation where the network changes slowly compared to the spread of the disease, and we study the qualitative behaviour of the resulting model using Geometric Singular Perturbation Theory (GSPT) \cite{fenichel1979geometric,kuehn2015multiple}. Techniques from GSPT have been applied to epidemiological models in which the loss of immunity and demography are slow compared to infection and recovery from a disease in \cite{Mattia2021FastSlow,jardon2021geometric}. Additionally, similar techniques were applied to epidemics modelling e.g. in \cite{brauer2019singular,bravo2021discrete,schecter2021geometric,zhang2009singular,aguiar2021time}. 

The paper is structured as follows. We introduce the aNIMFA model in Sec.\ \ref{sec_model} and provide a thorough analysis in Sec.\ \ref{sec_model_analysis}. Then we consider several examples of functional responses in Sec.\ \ref{sec_examples}. We study a slowly evolving network in Sec.\ \ref{sec_slow_network} using Geometric Singular Perturbation Theory and present a conclusion in Sec.\ \ref{sec_conclusion}.

\section{The aNIMFA model}\label{sec_model}
Consider a well-mixed population of $N$ individuals, subject to the spread of a disease. The mean-field dynamics of the SIS process for a well-mixed population is generally described in terms of the average fraction of infected nodes $y(t)$, also known as the \emph{prevalence}. The governing equation equals
\begin{equation}\label{eq_diff_y_temp}
	\df{y}{t} = -\delta y + \beta y (1-y) z,
\end{equation}
where the curing process is denoted by its rate $\delta$, the infection process by the corresponding rate $\beta$ and $z$ is the link density. In the first term in Eq.\ (\ref{eq_diff_y_temp}), the prevalence decreases proportional to the current number of infected cases. The second term in Eq.\ (\ref{eq_diff_y_temp}) increases the prevalence because of contact between infected $y$ and susceptible $1-y$ nodes. Because of the homogeneous mixing, we multiply with the link density $z$ to obtain the average number of contacts. Equation (\ref{eq_diff_y_temp}) directly follows from the N-Intertwined Mean-Field Approximation (NIMFA) equations \cite{IntroNIMFA} when considering homogeneous infection and curing rates, symmetric initial conditions and a complete graph with weight $z$.

Contrary to the static SIS process, we assume that the link weight $z(t)$ is varying over time and its dynamics is governed by a link-breaking and a link-creation process. Then the link density $z(t)$ changes over time as
\begin{equation}\label{eq_diff_z_temp}
	\df{z}{t} = - \zeta z \fbr(y) + \xi (1-z) \fcr(y),
\end{equation}
where $\zeta$ is the link-breaking rate, $\xi$ the link-creation rate and $\fbr(y)$ and $\fcr(y)$ are the functional responses to the link-breaking and link-creation process, respectively. We assume the parameters $\delta$, $\beta$, $\zeta$, $\xi$ to be positive. The link weight $z$ has been normalised, such that $z=1$ is the maximum link weight (corresponding to a complete graph) and $z=0$ corresponds to an empty graph (no connections, so the link weight is zero).

Equations (\ref{eq_diff_y_temp}) and (\ref{eq_diff_z_temp}) can be simplified by introducing the scaled time $\tilde{t} = \delta t$. We additionally introduce the \emph{effective infection rate} $\tau=\beta/\delta$ and the \emph{effective link-breaking rate} $\omega=\zeta/\xi$. Using the transformations $\tilde{\zeta}=\zeta/\delta$ and $\tilde{\xi}=\xi/\delta$, the well-mixed adaptive NIMFA (aNIMFA) equations are obtained (after dropping the tildes, for ease of notation)
\begin{subequations}\label{eq_aNIMFA}
    \begin{align}
    	\df{y}{t} &= -y + \tau y (1-y) z, \label{eq_diff_y} \\
    	\df{z}{t} &= - \zeta z \fbr(y) + \xi (1-z) \fcr(y), \label{eq_diff_z} \\
    	& \qquad \qquad \text{feasible region } 0 \leq y \leq 1, 0 \leq z \leq 1. \nonumber
\end{align}
\end{subequations}
The initial conditions $y(0)\in[0,1]$ and $z(0)\in[0,1]$ describe the initial prevalence and link-density, respectively. We assume that the functional responses $\fbr(y)$ and $ \fcr(y)$ are non-negative, sufficiently regular functions on the interval $0 \leq y \leq 1$. We exclude the possibility that $\fbr(y)=0$ and $\fcr(y)=0$ for all $y$, as in this case, one reduces to a static $z$-regular graph.

\section{Analysis of the model}\label{sec_model_analysis}
Prior to confining ourselves to specific link-breaking and link-creation functions $\fbr$ and $\fcr$, we first derive general results for the aNIMFA model.

\begin{lemma}\label{lem:bounded}
Consider a solution of system \eqref{eq_aNIMFA} starting at $(y(0),z(0)) \in [0,1]^2$. Recall that $\fbr(y),\fcr(y)\geq 0$ for all $y\in [0,1]$. Then, $(y(t),z(t)) \in [0,1]^2$ for all $t \geq 0$.
\end{lemma}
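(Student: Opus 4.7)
The plan is to apply the standard Nagumo-type invariance criterion for planar ODEs: a closed set $K$ is positively invariant under a sufficiently regular vector field if at every boundary point the field lies in the tangent cone to $K$. For the closed square $[0,1]^2$, this reduces to verifying that along each of its four edges the outward normal component of $(\dot y,\dot z)$ is non-positive. The regularity required to make this rigorous is furnished by the assumption that $\fbr$ and $\fcr$ are sufficiently regular on $[0,1]$, which guarantees local existence and uniqueness of solutions.

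Next, I would simply inspect each edge of the square. On $\{y=0\}$, equation \eqref{eq_diff_y} gives $\dot y = 0$, so the field is tangent to this edge, and in fact $y \equiv 0$ is itself a solution; by uniqueness, any trajectory with $y(0)=0$ stays on this line, and any trajectory with $y(0)>0$ cannot cross it. On $\{y=1\}$, $\dot y = -1 + \tau(1)(1-1)z = -1 < 0$, so the field points strictly inward. On $\{z=0\}$, $\dot z = \xi (1-0)\fcr(y) = \xi\fcr(y)\geq 0$ by non-negativity of $\fcr$, so the field points inward or is tangent. On $\{z=1\}$, $\dot z = -\zeta\fbr(y)\leq 0$, so again the field points inward or is tangent. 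The corners are automatically covered since each corner lies on two edges and the conditions above hold simultaneously.

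To turn this into a clean argument, I would use the contrapositive: suppose some trajectory started in $[0,1]^2$ leaves the square, and let $t_0$ be the first exit time. Then $(y(t_0),z(t_0))$ lies on one of the four edges, and at $t_0$ the outward normal component of the velocity must be strictly positive, contradicting the sign computations above. In the marginal cases where the outward normal component is only $\leq 0$ (the edges $y=0$, $z=0$ with $\fcr(y)=0$, or $z=1$ with $\fbr(y)=0$), uniqueness of solutions rules out an exit, since the corresponding line is then locally invariant.

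I do not expect a real obstacle here, as the computation on each edge is immediate from the structure of the right-hand side and the hypothesis $\fbr,\fcr\geq 0$. The only point that warrants a brief comment is that the edges $y=0$ and, in degenerate cases, $z=0$ or $z=1$ yield a zero outward normal component; these are handled by invoking uniqueness, which is guaranteed by the sufficient regularity assumed on $\fbr$ and $\fcr$ in the statement introducing \eqref{eq_aNIMFA}.
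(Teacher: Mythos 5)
Your proposal is correct and follows essentially the same route as the paper: the same four boundary sign computations ($\dot y|_{y=0}=0$, $\dot y|_{y=1}=-1<0$, $\dot z|_{z=0}=\xi\fcr(y)\geq 0$, $\dot z|_{z=1}=-\zeta\fbr(y)\leq 0$) establishing forward invariance of $[0,1]^2$. The only difference is that you make explicit the Nagumo-type tangent-cone criterion and the uniqueness argument for the tangent edges, which the paper leaves implicit.
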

\begin{proof}
    We calculate
    $$
    \df{y}{t}\bigg|_{y=0}=0, \quad \df{y}{t}\bigg|_{y=1}=-1<0, \quad \df{z}{t}\bigg|_{z=0}=\xi \fcr(y) \geq 0, \quad \df{z}{t}\bigg|_{z=1}=-\zeta \fbr(y) \leq 0,
    $$
    which proves the forward invariance of $[0,1]^2$.
\end{proof}

\subsection{Disease-free equilibrium}
The aNIMFA model always has one steady state $y_0=0$, which corresponds to the situation in which no infected individuals are present in the population. In line with the literature, we call this steady state the \emph{disease-free equilibrium} (DFE). The DFE of the mean-field equations (\ref{eq_aNIMFA}) equals
\begin{align*}
	y_0 &= 0, \\
	z_0 &= \begin{cases}
		\frac{\fcr(0)}{\omega \fbr(0)+\fcr(0)} & \qquad \text{if } \fbr(0) \neq 0 \text{ or } \fcr(0) \neq 0, \\
		\text{free var} & \qquad \text{if } \fbr(0)=\fcr(0)=0.
	\end{cases}
\end{align*}

\subsection{Endemic equilibria}
Depending on the choice of the functional responses $\fbr$ and $\fcr$, multiple additional steady states may exist, which are called the \emph{endemic equilibria} (EE). The endemic equilibria are the solutions of the non-linear equation
\begin{equation}\label{eq_EE_implicit}
	\omega \fbr(\yinfty) = (\tau-1) \fcr(\yinfty) - \tau \yinfty \fcr(\yinfty),
\end{equation}
and the corresponding steady-state link density $\zinfty$ follows as
\begin{equation}\label{eq_EE_z}
	\zinfty = \frac{1}{\tau(1-\yinfty)}.
\end{equation}
We remark that the solution $\yinfty=1$ is never a valid EE for any functional responses $\fbr$ and $\fcr$, which follows immediately from substituting $\yinfty=1$ into Eq.\ (\ref{eq_EE_implicit}). Hence, all EE are in the open interval $(0,1)$; in Theorem \ref{thm_EE_exist} we prove that $\zinfty\in (0,1]$ as well.

Endemic equilibria may not need to exist for all parameter values. In particular, the point where the disease-free equilibrium loses stability often coincides with the birth of an endemic equilibrium. This happens when the \emph{basic reproduction number} $R_0$, which is the number of secondary infections produced by one average infected individual in an otherwise susceptible population, crosses the threshold value of $1$. Fortunately, there is always a region in the $(\tau,\omega)$-space where an EE exists, as we prove in Theorem~\ref{thm_EE_exist}.

\begin{theorem}[Existence of EE]\label{thm_EE_exist}
	The non-linear algebraic equation (\ref{eq_EE_implicit}) always has at least one solution for some $(\tau,\omega)$-region. In other words, there always exists at least one endemic state.
\end{theorem}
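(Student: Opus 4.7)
The idea is to reduce the existence of an endemic equilibrium to a zero of a single continuous function of $y$ on a closed interval, and then apply the intermediate value theorem. The substitution Eq.~(\ref{eq_EE_z}) has already reduced the system of equilibrium equations to (\ref{eq_EE_implicit}), so I only need to locate a suitable root in a range that keeps $\zinfty$ in $(0,1]$.

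First, I would fix $\tau>1$ and define
\begin{equation*}
F(y) := (\tau - 1 - \tau y)\,\fcr(y) \;-\; \omega\,\fbr(y),
\end{equation*}
which is continuous on $[0,1]$ by the regularity assumption on $\fbr,\fcr$, and whose zeros are exactly the solutions of (\ref{eq_EE_implicit}). Any root $\yinfty \in (0,\,1-1/\tau]$ of $F$ pairs via Eq.~(\ref{eq_EE_z}) with $\zinfty = 1/(\tau(1-\yinfty)) \in (1/\tau,\,1] \subset (0,1]$, which automatically certifies the claimed range for $\zinfty$. A direct evaluation at the right endpoint gives $F(1-1/\tau) = -\omega\,\fbr(1-1/\tau) \leq 0$, so it suffices to produce some $y_0 \in [0,1-1/\tau)$ at which $F$ is strictly positive.

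Next I would exhibit such a $y_0$. Since $\fcr\not\equiv 0$ on $[0,1]$, I pick $y_0\in[0,1)$ with $\fcr(y_0)>0$. Fix any $\omega>0$ and observe that $\tau(1-y_0)-1\to+\infty$ as $\tau\to\infty$, so
\begin{equation*}
F(y_0) = \bigl(\tau(1-y_0)-1\bigr)\,\fcr(y_0) - \omega\,\fbr(y_0) > 0
\end{equation*}
for all $\tau$ large enough; enlarging $\tau$ further also arranges $y_0 < 1-1/\tau$. The intermediate value theorem, applied to the continuous function $F$ on $[y_0,\,1-1/\tau]$, then produces the desired root $\yinfty\in(0,1-1/\tau]$, exhibiting a $(\tau,\omega)$-region in which an endemic equilibrium exists.

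I expect the main obstacle to be the case $\fcr(0)=0$: the naive evaluation $F(0)=(\tau-1)\fcr(0)-\omega\fbr(0)$ need not be positive no matter how $\tau$ and $\omega$ are tuned, so one cannot simply check the left endpoint of $[0,1-1/\tau]$. My fix is to avoid $y=0$ entirely and work at an interior point $y_0$ where $\fcr$ is strictly positive, whose existence is guaranteed unless $\fcr\equiv 0$—a degenerate case in which links are only broken and never re-created, no endemic equilibrium can persist, and which the theorem is implicitly understood to exclude.
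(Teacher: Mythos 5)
Your proof is correct, and it takes a more direct route than the paper. The paper works with the same function up to sign, $h(\yinfty) = \omega \fbr(\yinfty) + (1-\tau(1-\yinfty))\fcr(\yinfty)$, but argues by contraposition: it supposes $h$ could be kept single-signed for all $(\tau,\omega)$ and shows the required inequality $\omega \fbr(\yinfty') + \fcr(\yinfty') > \tau(1-\yinfty')\fcr(\yinfty')$ cannot hold unconditionally, then appeals to the Intermediate Value Theorem; the location of the sign change and the resulting admissible $(\tau,\omega)$-region are left implicit, and the bound $\zinfty\in(0,1]$ is obtained in a separate final step from $h(\yinfty)=0$. You instead exhibit the sign change explicitly: $F(1-1/\tau) = -\omega\fbr(1-1/\tau)\leq 0$ always, while $F(y_0)>0$ at any point with $\fcr(y_0)>0$ once $\tau$ is large (for fixed $\omega$), so the IVT on $[y_0,\,1-1/\tau]$ both produces the root and, through the choice of the right endpoint, delivers $\zinfty\in(1/\tau,1]$ automatically. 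This buys a cleaner logical structure (no contrapositive bookkeeping over ``all $(\tau,\omega)$''), an explicit description of the region where the EE exists (large $\tau$ at fixed $\omega$), and a built-in verification of the feasibility of $\zinfty$ that the paper handles separately. One caveat you share with the paper: both arguments need $\fcr\not\equiv 0$, whereas the paper's standing assumption only excludes $\fbr\equiv\fcr\equiv 0$; the paper silently asserts ``$\fcr$ is non-identically $0$'', while you flag the degenerate case $\fcr\equiv 0$ explicitly and correctly note that the statement is then understood to exclude it (indeed, e.g.\ $\fbr\equiv 1$, $\fcr\equiv 0$ admits no endemic equilibrium), so your treatment is, if anything, more careful on this point.
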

\begin{proof}
	We prove the theorem by showing that the reverse cannot hold, i.e. we look for functions $\fbr$ and $\fcr$ for which no solution exists for all $(\tau,\omega)$-values. Now we introduce the function $h(\yinfty)$ defined as
	\begin{equation*}
		h(\yinfty) = \omega \fbr(\yinfty) + \fcr(\yinfty)- \tau(1-\yinfty) \fcr(\yinfty).
	\end{equation*}
	We remark that solutions of $h(\yinfty)=0$ correspond to solutions of (\ref{eq_EE_implicit}). The function $h$ is sufficiently regular, because $h$ is the composite of such functions $\fbr$ and $\fcr$. According to the Intermediate Value Theorem, if there exists $\yinfty$ and $\yinfty'$ such that $h(\yinfty)>0$ and $h(\yinfty')<0$, then there must exists some $\yinfty''$ for which holds that $h(\yinfty'')=0$. To guarantee that solutions do not exist, we must prove that either $h(\yinfty) > 0$ or $h(\yinfty)<0$ for all $\yinfty$. Let us consider the first case; the second case can be proven with an identical strategy. The function $\fcr$ is non-negative and non-identically 0, thus there must exist some $\yinfty'\in (0,1)$ such that $\fcr(\yinfty')>0$. To ensure positivity of the function $h$ for $\yinfty'$, we find the condition
	\begin{equation}\label{eq_condition1}
		\omega \fbr(\yinfty') + \fcr(\yinfty') > \tau(1-\yinfty') \fcr(\yinfty').
	\end{equation}
	Suppose $\fbr(\yinfty')=0$, then the equation simplifies to
	\begin{equation*}
		1 > \tau (1-\yinfty'),
	\end{equation*}
	which is not satisfied unconditionally, that is, for all values of $\tau$, except if we would allow $\yinfty'=1$ as a solution (which is, fortunately, excluded as a steady-state solution, see Eq.\ (\ref{eq_diff_y})). 
 
 If $\fbr(\yinfty')>0$, condition (\ref{eq_condition1}) can also not be satisfied unconditionally, because $\tau$ and $\omega$ appear on opposite sides of the equation and neither side is zero, so condition (\ref{eq_condition1}) cannot be true for all $\tau$ and $\omega$. We conclude that there is always a non-empty $(\tau,\omega)$-region where a solution exists. 
	
    Finally, $h$ can be rewritten as
    $$h(\yinfty) = \omega \fbr(\yinfty) + (1- \tau(1-\yinfty)) \fcr(\yinfty).$$
    The assumption $\omega >0$, combined with $\fbr(\yinfty), \fcr(\yinfty)\geq 0$ and the fact that $h(\yinfty)=0$ leads to the condition that $\tau (1-\yinfty)\geq 1$. This ensures $\zinfty \in (0,1]$ (recall Eq.\ (\ref{eq_EE_z})).
\end{proof}

\subsection{Basic reproduction number}\label{basrepnum}
In this section, we provide an expression for the \emph{basic reproduction number} $R_0$, also known as the epidemic threshold, using the next generation matrix method, which was first introduced in \cite{diekmann1990definition}, then generalized in \cite{VandenDriesscheWatmough} (see also \cite{diekmann2010construction}). Even though the compartmental component of system (\ref{eq_aNIMFA}) is one-dimensional (the equation for the link density does not count) and the analysis could also be done by local stability analysis, we have chosen for the next generation matrix method due to its widely spread use.

We rewrite the first equation of \eqref{eq:Jac0} as $J_{11}=M_{11}-V_{11}$, with $M_{11},V_{11} > 0$. The only  such splitting possible, assuming $\fcr(0)>0$, is
\begin{equation*}
	M_{11}=	\tau \zinfty, \quad 	V_{11}= 1.
\end{equation*}
Then, the basic reproduction number $R_0$ is $M_{11}V_{11}^{-1}$, i.e.
\begin{equation}\label{eq_basic_rep}
   R_0 = \tau \frac{\fcr(0)}{\omega \fbr(0)+\fcr(0)}.
\end{equation}
For the case $\fcr(0)=0$, the method does not apply: this models a particularly degenerate situation in our model, as absence of the disease does not increase the connectivity strength. We remark that, in such a simple one-dimensional context, the next generation matrix method naturally coincides with the linear stability analysis of the DFE which we carry out in the next section.

\subsection{Linear stability analysis}
We analyse the linear stability of the steady states by computing the Jacobian of Eq. (\ref{eq_aNIMFA}) as
\begin{equation*}
	J = \begin{pmatrix}
		-1+\tau(1-2\yinfty)\zinfty & \tau \yinfty(1-\yinfty) \\
		-\zeta \zinfty \fbr'(\yinfty) + \xi (1-\zinfty) \fcr'(\yinfty) & - \zeta \fbr(\yinfty) - \xi \fcr(\yinfty)
	\end{pmatrix}.
\end{equation*}
For the disease-free equilibrium $\yinfty=0,\zinfty=z_0$, we find
\begin{equation}\label{eq:Jac0}
	J(0,z_0) = \begin{pmatrix}
		-1+\tau z_0 & 0 \\
		-\zeta z_0 \fbr'(0) + \xi (1-z_0) \fcr'(0) & - \zeta \fbr(0) - \xi \fcr(0)
	\end{pmatrix}.
\end{equation}
Since the Jacobian for the disease-free equilibrium is lower-triangular, the eigenvalues are $\lambda_1 = -1+\tau z_0$ and $\lambda_2 = -\zeta \fbr(0)-\xi \fcr(0)$. The eigenvalues are always real, so (un)stable spirals cannot be observed. We now consider several cases.
\begin{enumerate}
	\item \textbf{Case $\fbr(0)=0$ and $\fcr(0)=0$} \\
	The eigenvalues are $\lambda_1= -1+\tau z_0$ and $\lambda_2 = 0$, which makes the stability undeterminable using linear stability analysis.
	\item \textbf{Case $\fbr(0)=0$ and $\fcr(0)>0$} \\
	The eigenvalues are $\lambda_1 = -1 + \tau$ and $\lambda_2 = -\xi \fcr(0)$. Thus the disease-free equilibrium is a stable node if $\tau < 1$ and an unstable node if $\tau > 1$. For $\tau=1$, the stability is undetermined. In this case, $z_0=1$.
	\item \textbf{Case $\fbr(0)>0$ and $\fcr(0)=0$} \\
	The eigenvalues are $\lambda_1 = -1$ and $\lambda_2 = -\zeta \fbr(0)$, thus the DFE is a stable node. In this case, $z_0=0$.
	\item \textbf{Case $\fbr(0)>0$ and $\fcr(0)>0$} \\
	The eigenvalues are $\lambda_1 = -1 + \tau \frac{\fcr(0)}{\omega \fbr(0) + \fcr(0)}$ and $\lambda_2 = -\zeta \fbr(0)-\xi \fcr(0)$. Eigenvalue $\lambda_2 < 0$, thus the stability solely depends on $\lambda_1$. The disease-free equilibrium is a stable node if $\tau < \frac{\omega \fbr(0)+\fcr(0)}{\fcr(0)}$, an unstable node if $\tau > \frac{\omega \fbr(0)+\fcr(0)}{\fcr(0)}$ and is undetermined otherwise. 
\end{enumerate}

We remark that, in cases 2 and 4, the linear stability or instability of the DFE coincides with $R_0$ derived in Section \ref{basrepnum} being smaller or bigger than 1.

Unfortunately, we cannot directly analyse the stability of the endemic equilibria, because (i) we do not know $\yinfty$ nor $\zinfty$ and (ii) we require the functions $\fbr$ and $\fcr$ and its derivatives $\fbr'$ and $\fcr'$ to determine the stability. Moreover, the existence of multiple endemic equilibria rules out the possibility of finding a Lyapunov function to prove the global stability of system (\ref{eq_aNIMFA}). Nevertheless, for specific functional responses $\fbr$ and $\fcr$ that have only a single EE, one could attempt to construct a Lyapunov function, which is outside the scope of this paper.

\subsection{Global stability}
Before proving global stability, we first consider limit cycles of the aNIMFA model, for which we invoke the Bendixson-Dulac theorem.
\begin{theorem}[Bendixson-Dulac]\label{thm:bend} 
    If there exists a $C^1$-function $\phi (y,z)$ such that the expression
    \begin{equation}\label{eq_Bendixson-Dulac}
        F(y,z) = \pd{(\phi f)}{y} + \pd{(\phi g)}{z}
    \end{equation}
    has the same sign $(\neq 0)$ almost everywhere in a simply connected region $R$, then the planar autonomous system
    \begin{align*}
        \df{y}{t} = f(y,z), \\
        \df{z}{t} = g(y,z),
    \end{align*}
    has no non-constant periodic solutions lying entirely within the region $R$.
\end{theorem}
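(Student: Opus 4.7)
The plan is to argue by contradiction, using the planar divergence theorem to compare an integral of $F$ over the interior of a hypothetical periodic orbit against a line integral that must vanish because the flow is tangent to that orbit.

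First I would suppose, for contradiction, that there exists a non-constant periodic solution $\gamma \subset R$, parametrised over a period $T>0$. Since $\gamma$ is a simple closed curve in the plane, the Jordan curve theorem gives a bounded open region $D$ with $\partial D = \gamma$. The hypothesis that $R$ is simply connected is used here: it ensures $D \subset R$, so that $\phi$, $f$, $g$ and all the derivatives appearing in $F$ are defined and sufficiently regular on $\overline{D}$.

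Next I would apply the two-dimensional divergence theorem to the vector field $\mathbf{X}=(\phi f, \phi g)$ on $D$:
\begin{equation*}
\iint_{D} F(y,z)\, dy\, dz = \iint_{D}\!\left(\pd{(\phi f)}{y}+\pd{(\phi g)}{z}\right) dy\, dz = \oint_{\partial D} \mathbf{X}\cdot \mathbf{n}\, ds,
\end{equation*}
where $\mathbf{n}$ is the outward unit normal to $\gamma$. Since $\gamma$ is an orbit of the system, the velocity $(f,g)$ is everywhere tangent to $\gamma$, so $\mathbf{X}\cdot\mathbf{n} = \phi\,(f,g)\cdot\mathbf{n} \equiv 0$ along $\gamma$. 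Hence the right-hand side vanishes, giving $\iint_{D} F(y,z)\, dy\, dz = 0$. On the other hand, by assumption $F$ has a fixed sign (say $F>0$) almost everywhere in $R$, and therefore almost everywhere in $D$; since $D$ has positive Lebesgue measure, $\iint_{D} F\, dy\, dz \neq 0$, a contradiction. The case $F<0$ is identical.

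The main obstacle, if any, is formal rather than conceptual: one must justify that the divergence theorem applies when $F$ is only of constant sign \emph{almost everywhere} (the exceptional set of measure zero does not affect the value of either integral, so this is harmless) and that the hypothesis of simple connectedness truly forces $D\subseteq R$ (without it, $\phi$ or the $C^1$ assumptions could fail somewhere inside $\gamma$, and the argument collapses). The $C^1$ regularity of $\phi$, together with the assumed regularity of $f,g$ coming from $\fbr,\fcr$ in the aNIMFA application, ensures that $\mathbf{X}$ is $C^1$ on $\overline{D}$ and the divergence theorem is valid in its classical form.
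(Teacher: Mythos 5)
Your argument is correct: it is the classical Green's/divergence-theorem proof of the Bendixson--Dulac criterion, using tangency of $(f,g)$ along a hypothetical closed orbit to force $\iint_D F\,dy\,dz=0$ and simple connectedness of $R$ to guarantee the enclosed region $D$ lies in $R$. The paper itself does not prove this theorem but only cites \cite{bendixson1901courbes} and \cite{li1993bendixson}, and your proof is essentially the standard one found there; the only step you assert without comment --- that a non-constant periodic orbit of a planar autonomous system is a simple closed ($C^1$) curve, by uniqueness of solutions --- is standard and harmless to leave implicit.
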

A proof of Theorem \ref{thm:bend} can be found in \cite{bendixson1901courbes}, or in \cite{li1993bendixson} for the $n$-dimensional case. We now apply Theorem \ref{thm:bend} to prove that system \eqref{eq_aNIMFA} admits no periodic solutions.
\begin{theorem}
    System \eqref{eq_aNIMFA} admits no non-trivial periodic solutions.
\end{theorem}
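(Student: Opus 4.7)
The plan is to invoke the Bendixson-Dulac theorem (Theorem \ref{thm:bend}) with a Dulac function $\phi(y,z)$ chosen to cancel the common factor of $y$ in \eqref{eq_diff_y} and the $z$-factor in the $\zeta$-term of \eqref{eq_diff_z}. My first attempt would be $\phi(y,z) = 1/(yz)$, which is positive and $C^1$ on the simply connected open region $R=(0,1)\times(0,1)$ where $\phi$ is smooth.

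With this choice, a direct calculation reduces $\phi$ times the right-hand sides of \eqref{eq_aNIMFA} to $\phi f = -1/z + \tau(1-y)$ and a three-term expression for $\phi g$ whose only $z$-dependence enters through a $1/(yz)$ factor multiplying $\xi \fcr(y)$. Differentiating once in each variable then yields
\begin{equation*}
F(y,z) = \pd{(\phi f)}{y} + \pd{(\phi g)}{z} = -\tau - \frac{\xi \fcr(y)}{yz^2} \leq -\tau < 0
\end{equation*}
throughout $R$, so $F$ has constant (negative) sign there regardless of the specific form of the functional responses $\fbr$ and $\fcr$.

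To close the argument, I need to verify that every non-trivial periodic orbit lies in $R$. Lemma \ref{lem:bounded} already confines orbits to $[0,1]^2$. The edge $\{y=0\}$ is forward-invariant since $\dot y|_{y=0}=0$, so a non-constant orbit cannot meet it; the edge $\{y=1\}$ is excluded by $\dot y|_{y=1}=-1<0$, which forbids $y=1$ from being a local maximum of $y$ along a closed curve. The edges $\{z=0\}$ and $\{z=1\}$ are ruled out analogously using $\dot z|_{z=0}=\xi \fcr(y)\geq 0$ and $\dot z|_{z=1}=-\zeta \fbr(y)\leq 0$, together with a brief case analysis showing that any orbit tangent to one of these edges throughout an interval would force $\fcr$ or $\fbr$ to vanish along it, after which the $y$-equation collapses to $\dot y = -y$ and the orbit is driven to a boundary equilibrium, contradicting non-triviality.

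Combining the two ingredients, the Bendixson-Dulac theorem excludes any non-constant periodic solution in $R$. The Dulac computation itself is a one-line manipulation once the right $\phi$ is identified, so the only delicate step — and what I expect to be the main obstacle — is the boundary analysis just outlined: one must guarantee that periodic orbits cannot ``hide'' on the edges of $[0,1]^2$ where $\phi$ blows up, in particular in degenerate sub-cases where $\fbr$ or $\fcr$ vanishes on a set of positive measure.
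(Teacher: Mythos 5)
Your proposal is correct and follows essentially the same route as the paper: the same Dulac function $\phi(y,z)=\frac{1}{yz}$ on the same region $R=(0,1)^2$, yielding the same expression $F(y,z)=-\tau-\xi\fcr(y)/(yz^2)<0$. The additional boundary analysis you supply (ruling out periodic orbits touching the edges of $[0,1]^2$, where $\phi$ is singular) is a sound extra rigor step that the paper's proof leaves implicit, but it does not change the substance of the argument.
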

\begin{proof}
We verify the Bendixson-Dulac criterion using $\phi(y,z) = \frac{1}{yz}$ for our system (\ref{eq_aNIMFA}) in the region $R=(0,1)^2$. We find
\begin{align*}
    \phi f &= -\frac{1}{z} + \tau (1-y), \\
    \phi g &= - \zeta \frac{\fbr(y)}{y} + \xi \frac{\fcr(y)}{yz} - \xi \frac{\fcr(y)}{y}.
\end{align*}
Filling in Eq.\ (\ref{eq_Bendixson-Dulac}) gives
\begin{equation*}
    F(y,z) = -\tau - \xi \frac{\fcr(y)}{y z^2}.
\end{equation*}
Since $y,z > 0$ and $\fcr(y)$ is a non-negative function, we conclude that $F < 0$ in the whole region $R=(0,1)^2$ and there cannot exist any limit cycles.
\end{proof}

Recall that the DFE is locally (hence, globally) unstable when $R_0>1$. We make the following remark:

\begin{remark}\label{rem:homocl}
Assume that $R_0>1$, and that the DFE is on the repelling part of the $z$-axis $\{ y=0, z>\frac{1}{\tau} \}$. Then, we can exclude the possibility of homoclinic orbits to the DFE, whose stable manifold is the $z$-axis. Under these assumptions, as a consequence of the Poincar{\'e}-Bendixson theorem, we conclude that in system (\ref{eq_aNIMFA}) the endemic equilibrium, if it is unique, is globally asymptotically stable. If multiple endemic equilibria exist, or the DFE is in the attracting part of the $z$-axis, i.e.\ $\{ y=0,z < \frac{1}{\tau}\}$, no general conclusions can be drawn.
\end{remark}

\section{Examples}\label{sec_examples}
In the previous section, we derived several general results for the aNIMFA model. However, certain properties, like the number and stability of the endemic states, could not be determined for general functional responses. In this section, we investigate several examples of functional responses $\fbr$ and $\fcr$, whereby we primarily focus on epidemiological applications. Then, by assumption, the link-breaking rule $\fbr(y)$ is likely to be increasing with the prevalence $y$ and the link-creation rule $\fcr(y)$ is exactly opposite. The aNIMFA model is, however, more versatile and can be applied to other spreading phenomena, including opinion dynamics, cascading failures and information transport in the human brain. These spreading phenomena are often more complex than SIS epidemic spread, thus requiring more complex (maybe even non-monotone) functional responses $\fbr$ and $\fcr$.

\subsection{Example 1: Random Link-Activation Deactivation \cite{RandomLinkDeActivate}}
Presumably the easiest functional responses are those that are totally unaffected by the current number of infected cases. Then the network density evolves independently of the epidemic prevalence. This model is known as the Random Link-Activation Deactivation (RLAD) model~\cite{RandomLinkDeActivate}. In this model, each link in the underlying network can be randomly created or broken, with rates $\xi$ and $\zeta$ respectively. Mathematically, we require that the functional responses $\fbr$ and $\fcr$ are constant and for simplicity, we consider $\fbr(y)=\fcr(y)=1$, and hence system (\ref{eq_aNIMFA}) becomes
\begin{subequations}\label{random_system}
	\begin{align}
		\df{y}{t} &= - y + \tau y (1-y) z, \\
		\df{z}{t} &= - \zeta z + \xi (1-z).
	\end{align}
\end{subequations}
Then, the basic reproduction number as defined in Eq.\ \eqref{eq_basic_rep} is $R_0=\frac{\tau}{1+\omega}$. In this simple example, the governing equation (\ref{eq_diff_z}) for the link-density $z(t)$ is decoupled from the prevalence $y(t)$ and can be solved directly;
\begin{equation*}
	z(t) = \frac{1}{1+\omega} + \left( z_0 - \frac{1}{1+\omega}\right) e^{-(\xi+\zeta) t},
\end{equation*}
where the effective link-breaking rate $\omega = \zeta/\xi$. If the exponential decays sufficiently fast (i.e. $\xi+\zeta$ is large), the network density quickly converges to $z=1/(1+\omega)$. Substituting $z=1/(1+\omega)$ into Eq.~(\ref{eq_diff_y}) and solving yields the famous logistic equation \cite{VerhulstLogistic} for the prevalence;
\begin{equation}\label{eq_RLAD_y}
    y(t) = \frac{\yinfty}{1+e^{-K(t-t_0)}},
\end{equation}
where $\yinfty = 1 - \frac{1+\omega}{\tau}$ is the steady-state prevalence, $K = \tau-1$ is the growth rate and $t_0 = \frac{1}{K} \ln\left(\frac{\yinfty}{y_0}-1\right)$ is the \emph{inflection point}, better known as the \emph{epidemic peak}.

The time-varying prevalence $y(t)$, given by Eq.~(\ref{eq_RLAD_y}), converges to a unique, non-zero, steady-state prevalence $\yinfty>0$ if $\tau>1+\omega$. Otherwise, for $\tau < 1+\omega$, the prevalence decreases exponentially to zero. The same result follows from linear stability analysis. The DFE, given by $(\yinfty, \zinfty)=(0,\frac{1}{1+\omega})$, is asymptotically stable for $\tau < 1+\omega$, unstable for $\tau > 1+\omega$ and undetermined for $\tau=1+\omega$. The unique endemic equilibrium is given by $(\yinfty, \zinfty)=\left( 1-\frac{1+\omega}{\tau}, \frac{1}{1+\omega}\right)$, which is in the biologically feasible region only if $R_0>1$, and coincides with the DFE when $R_0=1$. The Jacobian is
\begin{equation*}
	J = \begin{pmatrix}
		1-\frac{\tau}{1+\omega} & (1+\omega) \left(1-\frac{1+\omega}{\tau}\right) \\
		0 & - \zeta - \xi
	\end{pmatrix}.
\end{equation*}
The eigenvalues are $\lambda_1 = 1-\frac{\tau}{1+\omega}$ and $\lambda_2 = - \zeta - \xi < 0$. Thus the EE is a stable node if $R_0>1$, unstable node if $R_0<1$ and undetermined for $\tau=1+\omega$. As we remarked above, the case $R_0<1$ leads to $\yinfty<0$ which is biologically infeasible. The steady states and their behavior of the RLAD model is shown in the table below.
\begin{table}[H]
	\centering
\begin{tabular}{ccc}
	\textbf{Example 1:} $\fbr(y)=\fcr(y)=1$ & if $R_0 \leq 1$ & if $R_0 > 1$ \\
	\hline
	Disease-free state $\left(0,\frac{1}{1+\omega}\right)$ & stable node & unstable node \\
	Endemic equilibrium $\left( 1-\frac{1+\omega}{\tau}, \frac{1}{1+\omega}\right)$ & unstable node & stable node
\end{tabular}
\end{table}
Since the link-dynamics is decoupled from the disease dynamics in the RLAD model, the behaviour of the RLAD model is very similar to the static SIS model and undergoes the usual transcritical bifurcation, except that the basic reproduction number $R_0$ is a function of the effective link-breaking rate $\omega$. For other functional responses $\fbr$ and $\fcr$, we expect different behaviour, which will be investigated in the upcoming examples.

\subsection{Example 2: Epidemics: $\fbr(y)=y$, $\fcr(y)=1$}
Contrary to the randomly evolving links in Example 1, we expect that genuine epidemic outbreaks affect the number of contacts of people. We consider the simple case where the link-breaking process $\fbr(y)=y$ is a linear function of the prevalence, but the link-creation process remains independent from the total number of infections [$\fcr(y)\equiv 1$]. Then, the governing equations become
\begin{subequations}\label{eq_system}
	\begin{align}
		\df{y}{t} &= - y + \tau y (1-y) z, \\
		\df{z}{t} &= - \zeta z y + \xi (1-z).
	\end{align}
\end{subequations}
The basic reproduction number as defined in Eq.\ \eqref{eq_basic_rep} is $R_0=\tau$. The disease-free equilibrium $(\yinfty, \zinfty)=(0, 1)$ is a stable node if $\tau <1$, an unstable node if $\tau > 1$ and is otherwise undetermined. The unique EE follows from Eq.\ (\ref{eq_EE_implicit}) as $(\yinfty, \zinfty)=\left(\frac{\tau-1}{\tau+\omega},\frac{\tau+\omega}{\tau(1+\omega)}\right)$ and exists in the biologically feasible region for $\tau>1$.

We now show that the unique EE is locally stable for this specific choice of $\fbr$ and $\fcr$. The Jacobian around the EE equals
\begin{equation*}
	J = \begin{pmatrix}
		-1+\tau (1-2\yinfty) \zinfty & \tau \yinfty (1-\yinfty) \\
		-\zeta \zinfty & -\zeta \yinfty - \xi
	\end{pmatrix}=\begin{pmatrix}
		-\frac{\tau-1}{1+\omega} & \frac{\tau (\tau-1)(\omega+1)}{(\tau+\omega)^2} \\
		-\zeta \frac{\tau+\omega}{\tau(1+\omega)} & -\zeta \frac{\tau(\omega+1)}{\omega(\tau+\omega)}
	\end{pmatrix}.
\end{equation*}
Clearly, for $\tau>1$ and $\zeta,\omega>0$, we have $J_{1,1},J_{2,1},J_{2,2}<0$ and $J_{1,2}>0$; hence, $\text{tr}(J)<0$ and $\text{det}(J)>0$, which implies that the real parts of its eigenvalues are negative. Hence, the EE is locally stable. Following Remark \ref{rem:homocl}, the EE is also globally asymptotically stable for $R_0>1$, which is a consequence of the absence of limit cycles guaranteed by Bendixson-Dulac and the fact that the DFE is unstable for $R_0>1$.

We summarize the stability of the two equilibria in the following table and present simulations of the two possible behaviours of system \eqref{eq_system} in Figure \ref{fig_ex2}.

\begin{table}[H]
	\centering
	\begin{tabular}{ccc}
		\textbf{Example 2:} $\fbr(y)=y,\fcr(y)=1$ & if $R_0 \leq 1$ & if $R_0> 1$ \\
		\hline
		Disease-free state $\left(0,1\right)$ & stable node & unstable node \\
		Endemic equilibrium $\left(\frac{1-\frac{1}{\tau}}{1+\frac{\omega}{\tau}},\frac{1+\frac{\omega}{\tau}}{1+\omega}\right)$ & unstable spiral & stable spiral 
	\end{tabular}
\end{table}

\begin{figure}[H]
	\centering
	\subfloat[\label{fig_ex2_below} $R_0=0.8$]{%
		\includegraphics[width=0.49\textwidth]{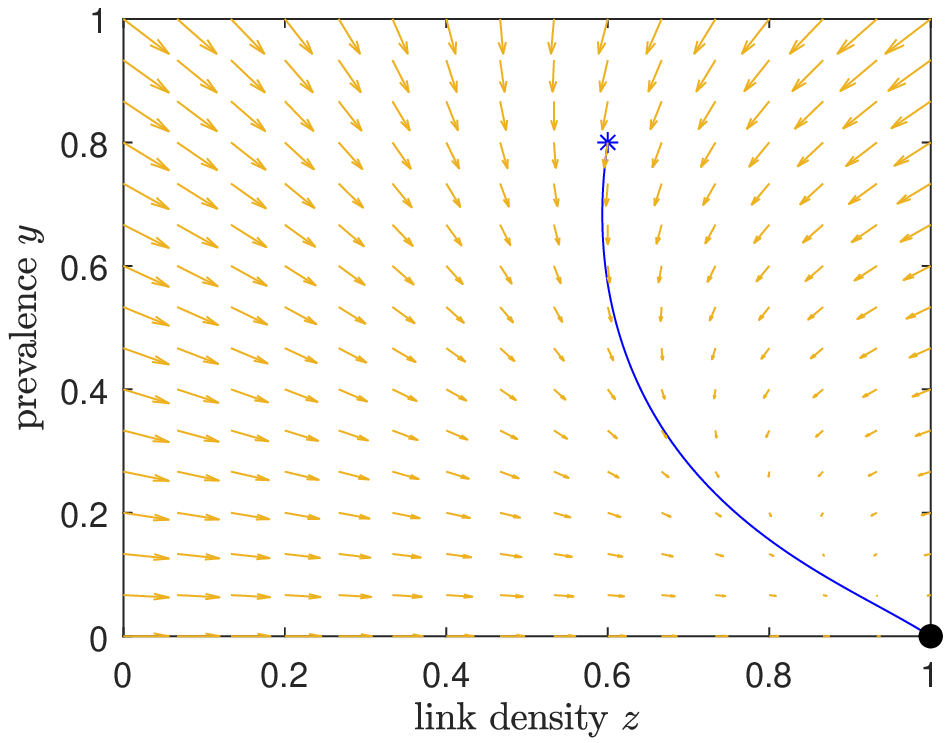}
	}
	\subfloat[\label{fig_ex2_above} $R_0=5.4$]{%
		\includegraphics[width=0.49\textwidth]{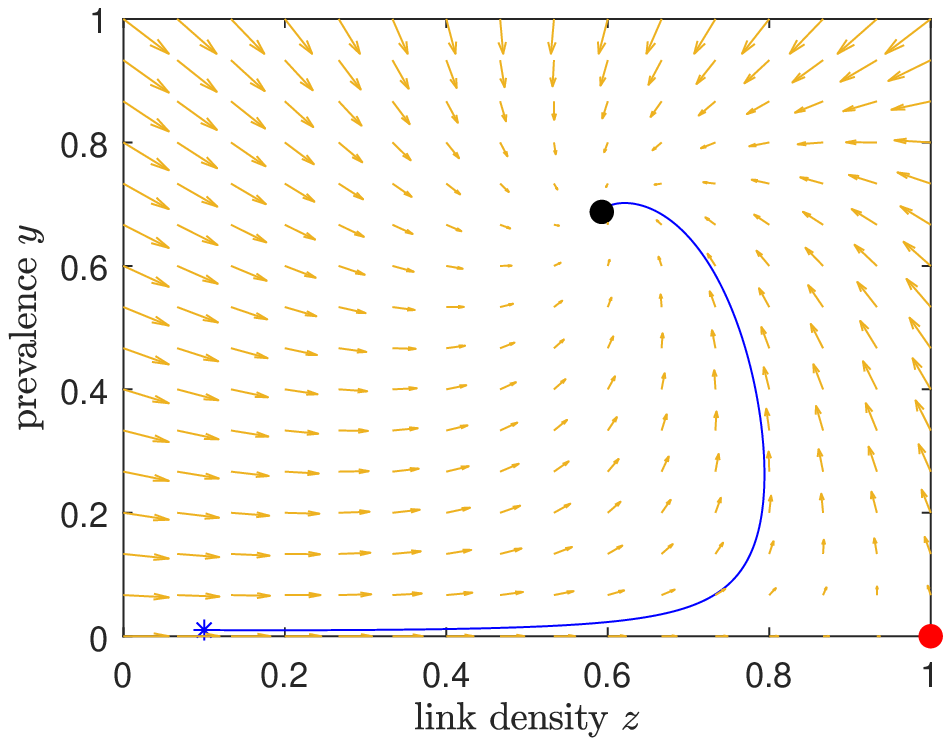}
	}
	\caption{Dynamics for Example 2. Starting point: asterisk; stable equilibrium: black dot; unstable equilibrium: red dot. (a) If $R_0<1$, any initial condition converges to the DFE. (b) If $R_0>1$, the unique EE is globally stable. The other parameters are, for simplicity, $\zeta=\xi=1$.}
	\label{fig_ex2}
\end{figure}

Comparing this example to Example 1, the behaviour is different in two ways. First, the basic reproduction number $R_0 = \tau$  does not depend on the link-breaking rate~$\zeta$ and link-creation rate~$\xi$. Second, the endemic equilibrium remains a globally stable equilibrium, but in this case, the endemic equilibrium shows spiral behaviour around the equilibrium.

\subsection{Example 3: The Adaptive SIS model}
The adaptive SIS (ASIS) model was introduced by Guo \emph{et al}.\ \cite{Guo2013} to describe the responses of individuals to an on-going pandemic. In particular, it was assumed that links are broken between susceptible and infected nodes and (re)created between susceptible nodes. The aNIMFA approximation of the ASIS model was already analysed in \cite{AchterbergMieghemAdaptiveMeanField} and the functional responses were derived as $\fbr(y)=2 y (1-y)$ and $\fcr(y)=(1-y)^2$. The link-breaking response is similar to Example 2, but the term $1-y$ was added to account for the fact that for large epidemic outbreaks, the susceptible population may be depleted and the possibility to break links between susceptible and infected individuals decreases, simply because of the lack of susceptible individuals. The factor 2 is a conversion factor from the original Markovian model; we keep this factor for consistency with \cite{AchterbergMieghemAdaptiveMeanField}. The link-creation response is more intuitive; we expect many links to be created if the disease is almost nonexistent. Hence, we are considering the system of ODEs
\begin{subequations}\label{ASIS_system}
	\begin{align}
		\df{y}{t} &= - y + \tau y (1-y) z, \\
		\df{z}{t} &= - 2\zeta zy(1-y) + \xi (1-z)(1-y)^2.
	\end{align}
\end{subequations}
The basic reproduction number as defined in Eq.\ \eqref{eq_basic_rep} is, once again, $R_0=\tau$. The disease-free equilibrium $(0,1)$ is a stable node for $R_0<1$, unstable node for $R_0>1$ and is undetermined otherwise. The unique endemic equilibrium has $y$-coordinate \cite{AchterbergMieghemAdaptiveMeanField}
\begin{equation*}
    \yinfty = 1-\frac{1-2\omega}{2\tau} - \sqrt{\bigg(\frac{1-2\omega}{2\tau}\bigg)^2 + \frac{2\omega}{\tau}},
\end{equation*}
and the EE becomes $\left(\yinfty, \frac{1}{\tau(1-\yinfty)}\right)$. Using basic arithmetic, it can be verified that $\tau>1$ implies $0<\yinfty\leq 1-\frac{1}{\tau}$, which ensures that the EE is contained in the physical region $(0,1)^2$. Thus, the EE exists for $R_0>1$.

The calculations needed for the stability of the EE become extremely cumbersome; however, the Bendixson-Dulac theorem, the uniqueness of the EE, the boundedness of solutions (see Lemma \ref{lem:bounded}) and the instability of the DFE ensure that the EE is globally asymptotically stable when $R_0>1$ (recall Remark \ref{rem:homocl}).

We summarize the stability of the two equilibria in the following table and present simulations of the two possible behaviours of system \eqref{ASIS_system} in Figure \ref{fig_ex3}.

\begin{table}[H]
	\centering
	\begin{tabular}{ccc}
		\textbf{Example 3:} $\fbr(y)=2y(1-y),\fcr(y)=(1-y)^2$ & if $R_0 \leq 1$ & if $R_0 > 1$ \\
		\hline
		Disease-free state $\left(0,1\right)$ & stable node & unstable node \\
		Endemic equilibrium $\left(\yinfty, \frac{1}{\tau(1-\yinfty)}\right)$ & unstable spiral & stable spiral
	\end{tabular}
\end{table}

\begin{figure}[H]
	\centering
	\subfloat[\label{fig_ex3_below} $R_0=0.8$]{%
		\includegraphics[width=0.49\textwidth]{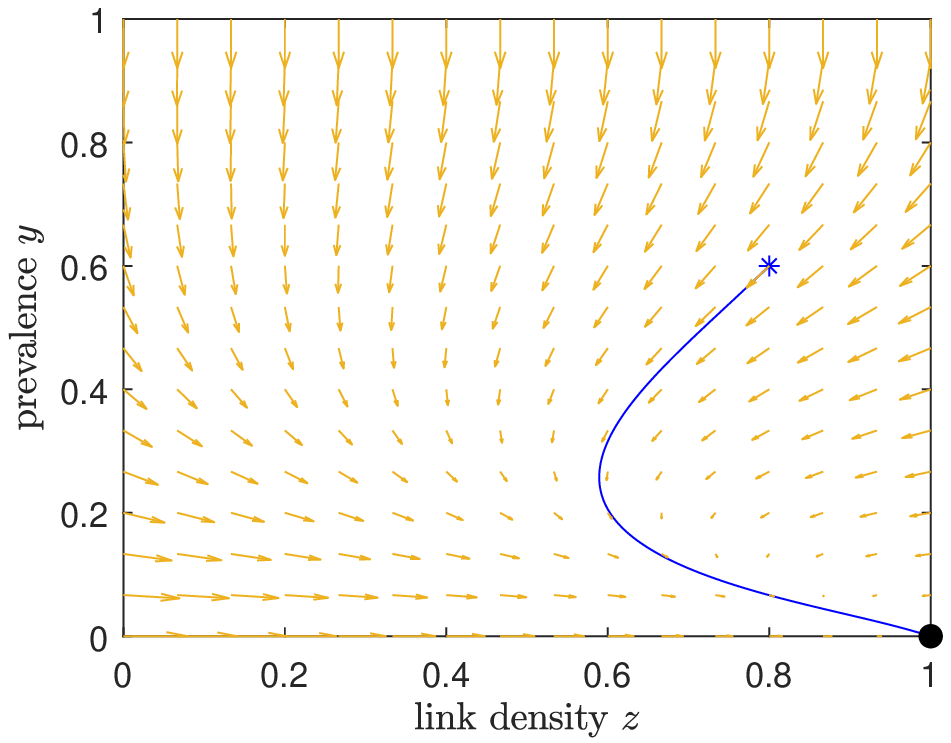}
	}
	\subfloat[\label{fig_ex3_above} $R_0=2.0$]{%
		\includegraphics[width=0.49\textwidth]{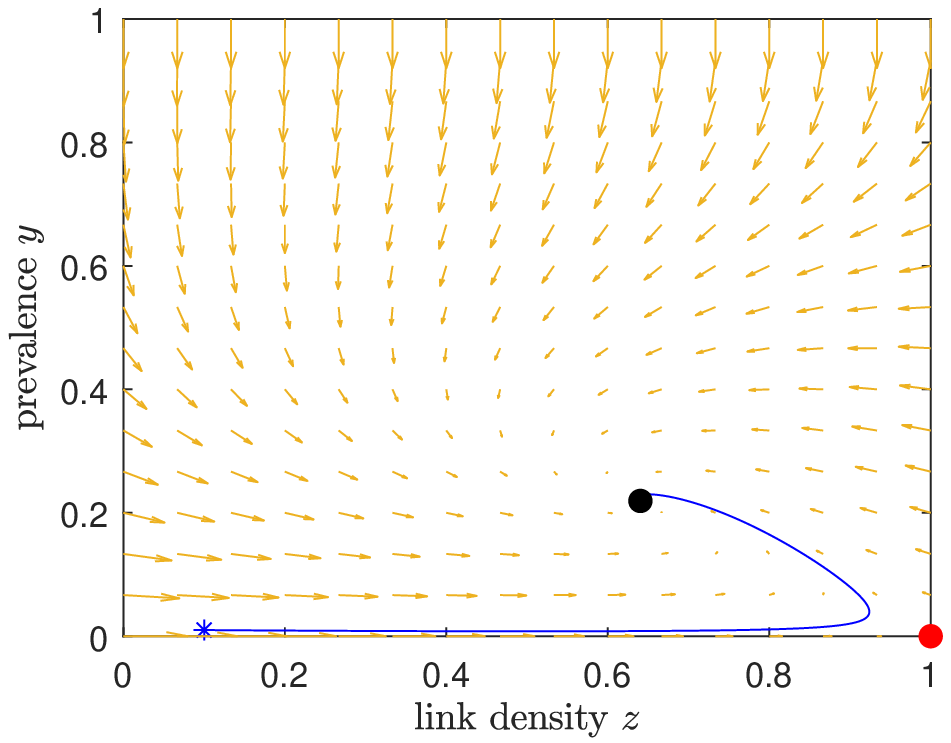}
	}
	\caption{Dynamics for Example 3. Starting point: asterisk; stable equilibrium: black dot; unstable equilibrium: red dot. (a) If $R_0<1$, any initial condition appears to converge to the DFE. (b) If $R_0>1$, the unique EE is globally stable. The other parameters are, for simplicity, $\zeta=\xi=1$.}
	\label{fig_ex3}
\end{figure}

\subsection{Example 4: Information spread}
In this section we consider an example from opinion dynamics, where a rumour is spreading in a population. The rumour is assumed to be attractive; hence, links are created to enhance the rumour spread. We use here the Adaptive Information Diffusion (AID) model, introduced by Trajanovski \emph{et al}.~\cite{Trajanovski2015} to describe the spread of information.

The prevalence can be interpreted as the fraction of the population that knows the rumour. Infection is equivalent to hearing the news and curing corresponds to forgetting the news. As a link-breaking response, we consider $\fbr(y) = (1-y)^2$, which reduces the link density when the prevalence is low. On the other hand, the link-creation response $\fcr(y)=2y(1-y)$ is based on the fact that the gossip is worth knowing, and thus the link density increases for larger prevalence $y$. However, when the news is only slightly present, little people may transmit the news to their neighbours, thereby we multiplied by the factor $(1-y)$. The factor 2 is again a conversion factor from \cite{AchterbergMieghemAdaptiveMeanField}. To summarise, the system of ODEs is given by:
\begin{subequations}\label{info_system}
	\begin{align}
		\df{y}{t} &= - y + \tau y (1-y) z, \\
		\df{z}{t} &= - \zeta z(1-y)^2 + 2\xi y (1-z)(1-y).
	\end{align}
\end{subequations}
The basic reproduction number $R_0$ cannot be determined in the traditional way using \eqref{eq_basic_rep}, as the disease-free equilibrium does not lose stability. Instead, we \emph{define} the basic reproduction number as the point where the two endemic equilibria are born (i.e.\ where \eqref{eq_ex4_EE} has non-complex solutions). Then the basic reproduction number follows as \cite{AchterbergMieghemAdaptiveMeanField}
\begin{equation}\label{eq_ex4_threshold}
	R_0 =\dfrac{2\tau}{\omega+2+\sqrt{8\omega}}.
\end{equation}
The disease-free equilibrium $(0,0)$ is stable for all $\tau>0$. The $y$-coordinates of the two endemic equilibria are given by \cite{AchterbergMieghemAdaptiveMeanField}
\begin{equation}\label{eq_ex4_EE}
	(\yinfty)_{1,2} = \frac{2\tau + \omega - 2 \pm \sqrt{(2\tau + \omega - 2)^2 - 8\tau \omega}}{4\tau}
\end{equation}
and the EE become $\left(\yinfty, \frac{1}{\tau(1-\yinfty)}\right)$.
The dynamics of the AID model is plotted in Figure~\ref{fig_ex4}. For $R_0 < 1$, the solution converges to $(0,0)$. For $R_0 > 1$, the solution may converge to the disease-free state $(0,0)$, but also to the endemic equilibrium, depending on the initial condition. The dependence of the basic reproduction number $R_0$ on the effective link-breaking rate $\omega$ is non-linear, which contrasts all earlier examples, that were either independent or linearly dependent on the effective link-breaking rate~$\omega$. Lastly, since the DFE is in the attracting part of the $z$-axis, we can not in general rule out the existence of a homoclinic orbit from the DFE.

We summarize the stability of the two equilibria in the following table and present simulations of the two possible behaviours of system \eqref{ASIS_system} in Figure \ref{fig_ex4}.

\begin{table}[H]
	\centering
	\begin{tabular}{cccc}
		\textbf{Example 4:} $\fbr(y)=(1-y)^2,\fcr(y)=2y(1-y)$ & if $R_0 < 1$ & if $R_0 \geq 1$ \\
		\hline
		Disease-free state $\left(0,0\right)$ & stable node & stable node \\
		Endemic equilibrium 1 $\left((\yinfty)_1, \frac{1}{\tau(1-(\yinfty)_1)}\right)$ & non-existent & unstable node \\
		Endemic equilibrium 2 $\left((\yinfty)_2, \frac{1}{\tau(1-(\yinfty)_2)}\right)$ & non-existent & stable node
	\end{tabular}
\end{table}

\begin{figure}[H]
	\centering
	\subfloat[\label{fig_ex4_below} $R_0=0.27$]{%
		\includegraphics[width=0.49\textwidth]{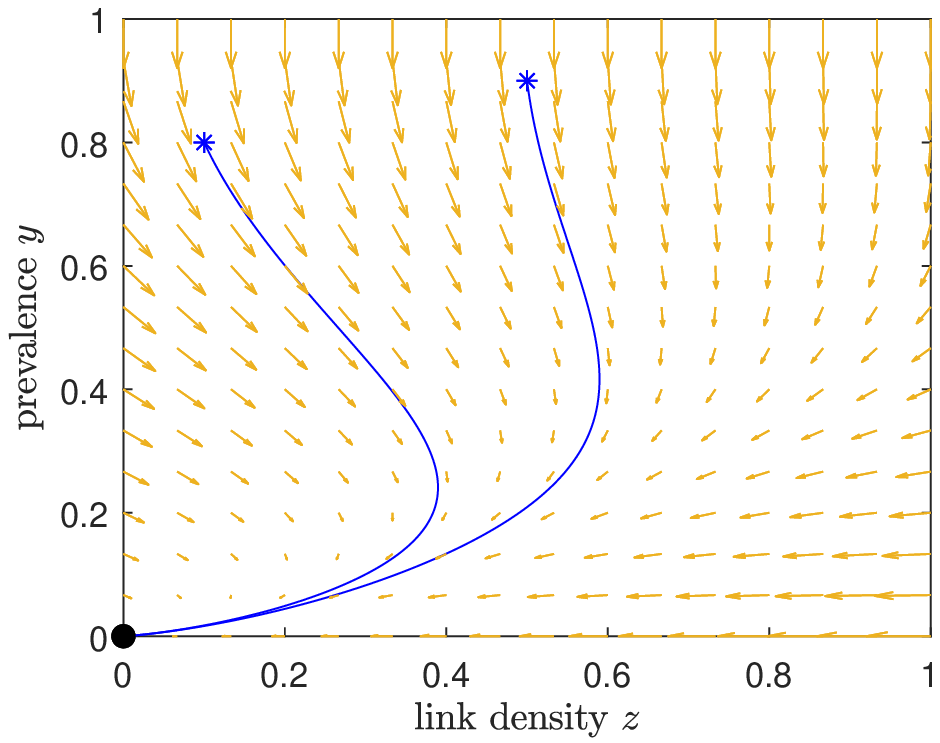}
	}
	\subfloat[\label{fig_ex4_above} $R_0=1.03$]{%
		\includegraphics[width=0.49\textwidth]{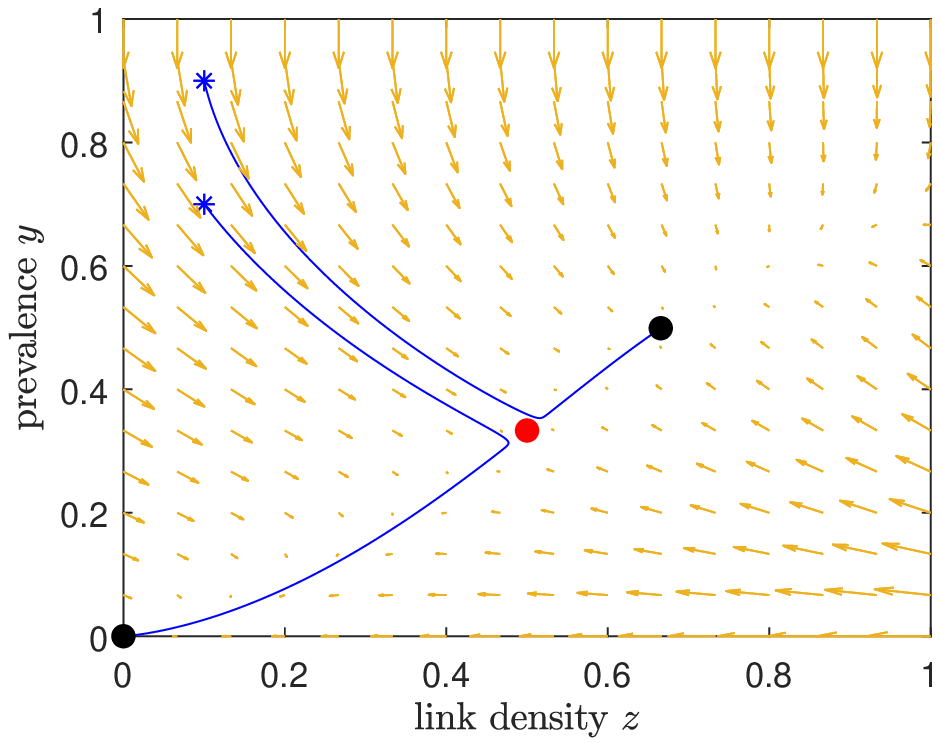}
	}
	\caption{Dynamics for Example 4. Starting points: asterisks; stable equilibrium: black dot; unstable equilibrium: red dot. (a) If $R_0<1$, any initial condition converges to the DFE. (b) If $R_0>1$, solutions may converge to the stable endemic equilibrium or the disease-free state, depending on the initial condition. The other parameters are, for simplicity, $\zeta=\xi=1$.}
	\label{fig_ex4}
\end{figure}

The basin of attraction of each stable equilibrium can be determined using a Lyapunov function. Such Lyapunov functions may distinguish for which initial conditions the system will converge to either the DFE or the stable EE. However, up to the best of the authors knowledge, no exact Lyapunov function can be constructed for system (\ref{eq_aNIMFA}) nor for most choices of the link-breaking and link-creation mechanisms.

However, the Lyapunov function can be approximated by considering a linearisation around a fixed point. For example, for the DFE $(0,0)$, its Jacobian equals
\begin{equation*}
    J_{(0,0)} = \begin{pmatrix}
        -1 & 0 \\
        2\xi & -\zeta
    \end{pmatrix}.
\end{equation*}
According to Khalil \cite[p.\ 73--80]{BookKhalil}, we can obtain an approximate Lyapunov function $\hat{V}$ by solving for the matrix $P$ in the following matrix equation
\begin{equation*}
    P J + J^T P = -I,
\end{equation*}
and the Lyapunov function follows as
\begin{equation*}
    \hat{V}(y,z) = \begin{pmatrix} y \\ z \end{pmatrix}^T P \begin{pmatrix} y \\ z \end{pmatrix}.
\end{equation*}
The estimated Region of Attraction $\Omega$ is then determined by the largest $c>0$ for which
$$
\Omega_c:=\{ (y,z) \in [0,1]^2 \, | \, \hat{V}(y,z) \leq c  \},
$$
is such that 
$$\Omega:=\max_{c >0} \left\{ \df{}{t} \hat{V}(\Omega_c) < 0 \right\}.$$
For Example 4, the estimated Lyapunov function around $(0,0)$ becomes
\begin{equation*}
    \hat{V}(y,z) = \frac{1}{2 \zeta} z^2 + \frac{1}{2} y^2 + \frac{2 \xi}{\zeta(1+\zeta)} yz + \frac{2 \xi^2}{\zeta(1+\zeta)} y^2
\end{equation*}
which is a tedious formula, but it is clear that $\hat{V} > 0$ in the biologically relevant region $[0,1]^2$. Unfortunately, the derivative $\df{}{t} \hat{V}$ is extremely complicated, even in such a simple case. Hence, we derive the largest possible approximate region of attraction numerically, which is shown in Figure~\ref{fig_ex4_ROA}. The approximate regions of attraction for the disease-free equilibrium $(0,0)$ and the stable endemic equilibrium are shown in orange, whereas the exact boundary separating the two regions, and thus the actual basins of attraction of the two equilibria, is shown as a light-blue curve. The estimated regions of attraction often poorly match with the true regions of attraction \cite{BookKhalil}, which is especially true for the stable EE in Figure~\ref{fig_ex4_ROA}. On the other hand, the region of attraction for the DFE is reasonably accurate.

\begin{figure}[H]
    \centering
    \includegraphics[width=0.4\textwidth]{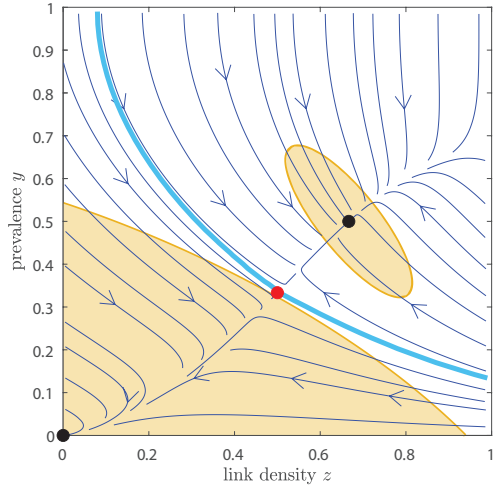}
    \caption{Regions of attraction for Example 4. The two attraction regions are separated by the numerically determined light-blue curve, which is the stable manifold of the unstable equilibrium. The approximate regions of attraction for the disease-free equilibrium and the stable endemic equilibrium are shown in orange. Black dots denote the stable equilibria and the red dot the unstable equilibrium. The parameters are, for simplicity, $\tau=3$, $\zeta=\xi=1$.}
    \label{fig_ex4_ROA}
\end{figure}

\section{Slow network dynamics}\label{sec_slow_network}
Suppose now that the network dynamics is slow compared to the disease spreading, that is to say, the disease is transmitted almost instantaneously when compared to the creation and removal of links in the network. Analytically, this translates in the introduction of a small parameter $0<\eps\ll 1$ in the system. We perform the substitutions $\zeta \mapsto \zeta\eps$ and $\xi \mapsto \eps$ such that
\begin{align}
	\begin{split}
	\df{y}{t} &= -y + \tau y (1-y) z, \\
	\df{z}{t} &= -\zeta \eps z \fbr(y) + \eps (1-z) \fcr(y).
	\end{split}\label{eq_slow_fast}
\end{align}
As before, the initial conditions are $y(0)\in [0,1]$ and $z(0)\in [0,1]$. We assume that all parameters (including initial conditions) are $\O(1)$-terms.

We intend to analyse Eq.\ (\ref{eq_slow_fast}) using Geometric Singular Perturbation Theory. System (\ref{eq_slow_fast}) is in standard GSPT form, and expressed in terms of the fast time variable $t$. In the limit $\eps\to 0$, we obtain the so-called \emph{layer equation}, or \emph{fast subsystem}:
\begin{align}\label{fast}
	\begin{split}
		\df{y}{t} &= -y + \tau y (1-y) z, \\
		\df{z}{t} &= 0.
	\end{split}
\end{align}
The corresponding critical manifold $\mathcal{C}_0$ is given by the union of the sets
\begin{equation}
    \mathcal{C}_0 =\{ (y,z)\in [0,1]^2 \ |\ y=0\} \cup \bigg\{ (y,z)\in [0,1]^2 \ \bigg|\ y= \frac{\tau z-1}{\tau z} \bigg\}.
\end{equation}
Notice that the second branch lies in the biologically relevant quadrant of $\mathbb{R}^2$ only for $z \geq 1/\tau$, and for this branch to have a non-empty intersection with $[0,1]^2$, we necessarily need $\tau >1$. 

Linearising the first equation of \eqref{fast} and evaluating it on $y=0$, we observe that the corresponding eigenvalue is 
$$
\lambda = \tau z -1,
$$
whereas the linearisation on the second branch of $\mathcal{C}_0$ gives
$$
\lambda=1 - \tau z.
$$
Clearly, the two branches of the critical manifold exchange stability at $(y,z)=(0,\frac{1}{\tau})$, which is a non-hyperbolic point. A visualisation of the stability of the two branches of $\mathcal{C}_0$ is shown in Figure \ref{fig_fast_slow}. 

\begin{figure}[H]
	\centering
	\begin{tikzpicture}
    	\node at (0,0){\includegraphics[width=0.4\textwidth]{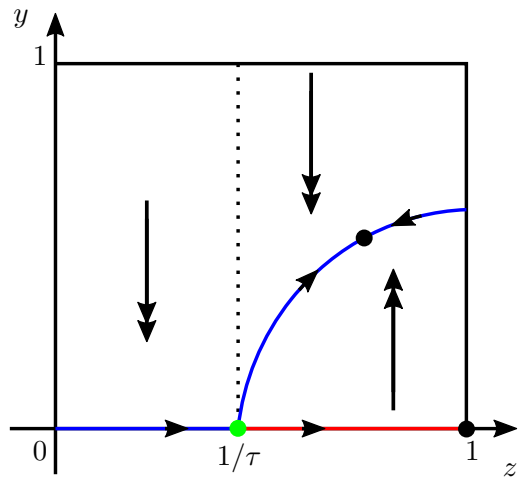}};
    	\node at (3.25,-3) {$z$};
    	\node at (-0.35,-2.85) {$1/\tau$};
    	\node at (-3.25,3) {$y$};
    	\node at (2.75,-2.75) {$1$};
    	\node at (-3,2.5) {$1$};
    	\node at (-3,-2.75) {$0$};
	\end{tikzpicture}
	\caption{Stability of the branches of the critical manifold $\mathcal{C}_0$ of  (\ref{eq_slow_fast}). Blue: stable; red: unstable. Green dot: non-hyperbolic point; black dots: equilibria. Double arrows: fast flow; single arrows: slow flow.}
	\label{fig_fast_slow}
\end{figure}

We now rescale time, introducing the slow time variable $s=\eps t$. System (\ref{eq_slow_fast}) becomes
\begin{align}
	\begin{split}
	\eps\df{y}{s} &= -y + \tau y (1-y) z, \\
	\df{z}{s} &= -\zeta  z \fbr(y) + (1-z) \fcr(y).
	\end{split}\label{slow}
\end{align}
Taking the limit as $\eps \rightarrow 0$, we obtain the so-called \emph{reduced subsystem}. The first equation defines once again the critical manifold $\mathcal{C}_0$: substituting $y$ in the second equation, we obtain one equation for the slow dynamics on the first branch of $\mathcal{C}_0$
\begin{equation}
    \df{z}{s} = -\zeta z \fbr(0) + \xi (1-z) \fcr(0),
    \label{slow_lim_gen_0}
\end{equation}
and one for the second
\begin{align}
	\df{z}{s} &= -\zeta z \fbr\left(\frac{\tau z-1}{\tau z}\right) + \xi (1-z) \fcr\left(\frac{\tau z-1}{\tau z}\right).\label{slow_lim_gen}
\end{align}
Without further specification for the functional responses $\fbr$ and $\fcr$, we can hardly deduce information on the asymptotic behaviour of the system. Hence, we return to Example 2 and consider $\fbr(y)=y$ and $\fcr(y)=1$ such that Eqs.\ \eqref{slow_lim_gen_0} and \eqref{slow_lim_gen} become, respectively, 
$$
\df{z}{s} = \xi (1-z),
$$
and
\begin{align}
	\df{z}{s} &= -\zeta \frac{\tau z-1}{\tau } + \xi (1-z).
\end{align}
The corresponding steady-states are $z_0=1$, representing the DFE, which the system tends towards if $y(0)=0$, and 
\begin{equation}
	\zinfty = \frac{\omega+\tau}{\tau \omega+\tau}\in \bigg(\frac{1}{\tau},1\bigg), \qquad \text{ if } \tau>1,
\end{equation}
representing the EE, which is globally asymptotically stable for orbits on the second branch of $\mathcal{C}_0$. Given that $\tau > 1$ and further assuming that $y(0) > 0$, two difference kinds of behaviour exist.

If $z(0)<1/\tau$, the system quickly approaches the line $y=0$, where $z$ starts to increase. We observe a delayed loss of stability, and we can approximate the dynamics in a neighbourhood of $y=0$ with the so-called entry-exit function \cite{de2016entry,neishtadt1987persistence,neishtadt1988persistence}. An orbit entering a neighbourhood of $y=0$ at a point with $z$-coordinate $z=z_{\text{in}}$ will exit the same neighbourhood at a point with $z$-coordinate $z=z_{\text{out}} >1/\tau> z_{\text{in}}$. The value $z_{\text{out}}$ is given implicitly as the unique solution of
$$
\int_{z_{\text{in}}}^{z_{\text{out}}} \dfrac{\tau z - 1}{1-z} \text{d}z =0.
$$
Since the integrand function diverges at $+\infty$ as $z\rightarrow 1$, it follows that the exit point $z_{\text{out}}$ will be strictly smaller than $1$ for any $z(0) \in [0,1/\tau)$. We refer to \cite[Sec. 3]{jardon2021geometric} for a detailed analysis of a similar entry-exit function, derived from a different epidemiological model.

If $z(0)>1/\tau$, the fast dynamics brings the system close to the second branch of $\mathcal{C}_0$; once an orbit reaches an $\mathcal{O}(\eps)$ neighbourhood of this curve, the slow dynamics tends asymptotically towards $\zinfty$.

To summarize, if $y(0)=0$, the system tends towards the equilibrium $(y,z)=(0,1)$. If instead $y(0)\in(0,1]$, the system converges, possibly after a slow passage near $y=0$ which represents a ``dormant'' phase for the infection, towards the endemic equilibrium
\begin{equation*}
    (\yinfty,\zinfty) = \left( \frac{\tau-1}{\omega+\tau}, \frac{\omega+\tau}{\tau\omega+\tau} \right) \in (0,1]^2.
\end{equation*}
We emphasise that this holds true only for $R_0=\tau>1$. Figure~\ref{fig_ex_fast_slow} provides a numerical simulation that shows the entry-exit phenomenon. 

\begin{figure}[H]
    \centering
    \includegraphics[width=0.6\textwidth]{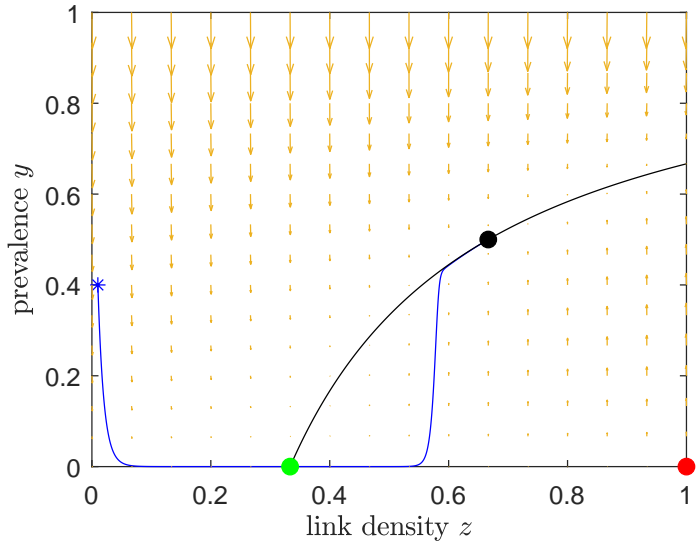}
    \caption{The entry-exit behaviour of the fast-flow system (\ref{eq_slow_fast}) for Example 2. Parameters are $R_0=\tau=3, \xi=\zeta=1, \eps=0.01$. The blue trajectory starts at the asterisk, exhibits a slow passage close to $y=0$ and then converges to the stable EE, indicated by the black dot. The red dot is the unstable equilibrium and the green dot is the non-hyperbolic fixed point. The solid black line represents the branch of the critical manifold characterized by $y>0$.}
    \label{fig_ex_fast_slow}
\end{figure}

\section{Conclusion}\label{sec_conclusion}
In this paper, we developed a minimal model for modelling an SIS disease spread with personal contact avoidance, called adaptive NIMFA (aNIMFA). We investigated local and global stability of the model and showed that limit cycles cannot exist. Furthermore, we analysed various examples in detail, from epidemic contagion to information spread.

In this work, we assumed an homogeneous mixing of the population. In reality, this homogeneity is often unrealistic; some people have frequent contacts while other people never meet. We expect that one can extend the current results for a community of subpopulations, on a network with $N$ nodes, as it was done in \cite{ottaviano2022globalpr} in order to generalize the results obtained in \cite{ottaviano2022global} for SAIRS compartmental models. While considering subpopulations, one must decide whether the link-breaking and link-creation functional responses act on the local prevalence of the node or on the global prevalence of the whole network. From a modelling perspective, we see possibilities for both approaches, or a mix of these \cite{zhang2022interplayawarenessdisease}.

We see several other interesting directions for future research. For example, is it possible to provide, besides continuity, conditions on $\fbr$ and $\fcr$ such that we can limit/bound the number of endemic equilibria from Eq.\ (\ref{eq_EE_implicit})? Can we determine for which $\fbr$ and $\fcr$ the endemic equilibrium is unique?

Moreover, for other types of infectious diseases, it could be beneficial to consider the opposite slow-fast decomposition, compared to the one we analysed in Section \ref{sec_slow_network}. Namely, one could consider the network to be much faster than the spread of the disease, possibly including an Exposed or Asymptomatic compartment through which Susceptible individuals need to pass before becoming Infected and infectious. As a final comment, we mention the possibility to include delays into the knowledge about the current prevalence. As the COVID-19 pandemic exemplified, testing an individual typically takes several hours or days before the result is communicated. Moreover, the daily reported cases by governmental agencies typically run a few days behind. One modelling approach is to convert the aNIMFA model into a delay-differential equation, which typically complicates the analysis significantly. We leave these possibilities as an outlook for future works.

{\footnotesize
	\bibliographystyle{unsrt}
	\bibliography{references}
}

\end{document}